\newtheorem{thm}{Theorem}[section]
\newtheorem{lem}[thm]{Lemma}
\newtheorem{rk}[thm]{Remark}
\numberwithin{equation}{section}
\begin{document}

\allowdisplaybreaks[4]

\title[Singular perturbation limit] 
{A singular perturbation limit of diffused interface energy with a fixed contact angle condition}

\author{Takashi Kagaya}
\address{Department of Mathematics, Tokyo Institute of Technology,
152-8551, Tokyo, Japan}
\email{kagaya.t.aa@m.titech.ac.jp}

\author{Yoshihiro Tonegawa}
\address{Department of Mathematics, Tokyo Institute of Technology,
152-8551, Tokyo, Japan}
\email{tonegawa@math.titech.ac.jp}

\medskip

\thanks{T. Kagaya is partially supported by JSPS Research Fellow Grant number 16J00547 and Y. Tonegawa is partially supported by JSPS KAKENHI Grant Numbers (A) 25247008 and (S) 26220702. } 

\begin{abstract}
We study a general asymptotic behavior of critical points of a diffused interface energy with a fixed contact angle condition defined on a domain $\Omega \subset \mathbb{R}^n$. 
We show that the limit varifold derived from the diffused energy 
satisfies a generalized contact angle condition on the boundary
under a set of assumptions.
\end{abstract}

\maketitle \setlength{\baselineskip}{18pt}

\section{Introduction}

In this paper, we consider a general asymptotic behavior of critical points of the energy functional 
\begin{equation}\label{energy} 
E_{\varepsilon}(u) = \int_{\Omega} \dfrac{\varepsilon|\nabla u|^2}{2}  + \dfrac{W(u)}{\varepsilon} \; dx + \int_{\partial \Omega} \sigma(u) \; d\mathcal{H}^{n-1} 
\end{equation}
under the restriction 
\begin{equation}\label{rest}
\int_\Omega u \; dx = m, 
\end{equation}
where $\varepsilon\in (0,1)$ is a small parameter, $\Omega \subset \mathbb{R}^n$ is a bounded domain, $u$ is a function defined on $\bar{\Omega}$, $W$ is a double well potential with strict minima at $\pm1$, $\sigma$ is a function on $\mathbb{R}$ and 
$m
\in(-|\Omega|,|\Omega|)$ is a fixed constant. $\mathcal H^{n-1}$ is the $n-1$-dimensional Hausdorff measure. 
According to the van der Waals-Cahn-Hilliard theory \cite{CH} and Cahn's approach \cite{C}, the energy \eqref{energy} is a typical energy modeling separation phenomena for capillary surfaces (see \cite{M2}). 
The function $u$, the strict minima of $W$ and the function $\sigma$ correspond to the normalized density of a multi-phase fluid, stable fluid phases and a contact energy density between the fluid and the container wall $\partial \Omega$, respectively.
The condition \eqref{rest} corresponds to fixing the total mass of the fluid in $\Omega$. 
If $E_\varepsilon(u_\varepsilon)$ is uniformly bounded with respect to $\varepsilon\in (0,1)$ for critical points $u_\varepsilon$ of $E_\varepsilon$, we may expect that the domain $\Omega$ is mostly divided into two regions $\{u_\varepsilon \approx 1\}$ and $\{u_\varepsilon \approx -1\}$ for sufficiently small $\varepsilon$. 

For energy minimizer of \eqref{energy}, Modica 
studied the contact angle condition in \cite{M2} 
within the framework of $\Gamma$-convergence. 
He showed the existence of energy minimizers $\{u_\varepsilon\}_{\varepsilon\in (0,1)}$ and the subsequential limit $u$ in $L^1$ as $\varepsilon \to 0$, and proved that $u = \pm 1$ a.e.\ on $\Omega$. 
Furthermore, in a weak sense, he showed under a suitable 
assumption on $\sigma$ that the contact angle $\theta$ formed by the boundary $\partial \Omega$ and the reduced boundary of $\{u=1\}$ in $\Omega$ 
is equal to
\begin{equation}\label{gam-cont-ang} 
\theta = \arccos\left(\dfrac{{\sigma}(1)-{\sigma}(-1)}{c_0}\right), 
\end{equation}
where 
\begin{equation}\label{c0} 
c_0 = \int^1_{-1} \sqrt{2W(s)} \; ds. 
\end{equation} 
The characterization of the contact angle condition 
is through
the energy minimality of the $\Gamma$-limit functional and it is essential that 
$u_{\varepsilon}$'s are global energy minimizers for the $\Gamma$-convergence 
argument. In view of the corresponding dynamical problem, however, it is interesting
to analyze the problem under a weaker assumption of being critical points. 
Our aim is to study the rigorous characterization of the contact angle condition due to the presence of the second term of \eqref{energy}
as $\varepsilon\rightarrow 0$. 

This line of research has been carried out by introducing a natural varifold associated with $u_\varepsilon$ (cf. \cite{HT,I,PT,RT}). 
Heuristically, the weight measure of the varifold behaves more or less like a surface measure of phase interface. 
One of the key tools to analyze a behavior of the varifold is the first variation.
In this paper, we focus on a behavior of the first variation of the associated varifolds up to the boundary and characterize the contact angle condition for the limit varifold along the line studied in \cite{KT}, as
described in Theorem \ref{thm:first-variation}. Roughly speaking, we
give a characterization of the tangential component of the 
first variation on $\partial\Omega$ which reduces to an appropriate
contact angle condition if all relevant quantities are smooth. 
Very closely related is the case of Neumann boundary condition,
namely, the case of $\sigma \equiv 0$. 
Mizuno and the second author \cite{MT} studied the gradient flow of \eqref{energy} in the case of $\sigma \equiv 0$  and analyzed a behavior of the first variation of the moving varifolds up to the boundary to derive a 
suitable Neumann boundary condition
for the limit Brakke flow. 

This paper is organized as follows. 
In Section \ref{eq-as}, we state known characterizations of limit varifold
in the interior of the domain due to \cite{HT,RT} along with setting our
notation. 
Section \ref{main} describes main results of the present paper,
which are the characterization of boundary behavior of the limit varifold. 
In Section \ref{proof}, we prove the main results and we give final remarks in Section \ref{remark}. 

\section{Preliminaries and interior behavior}\label{eq-as}

Let $\Omega \subset \mathbb{R}^n$ be a bounded domain
with smooth boundary $\partial\Omega$. We first describe the interior behavior of
general critical points of $E_{\varepsilon}$ under the following assumptions. 
Here we ignore the boundary conditions until the next section. 
\begin{itemize}
\item[(A1)]  $W\in C^\infty(\mathbb R)$ satisfies $W\geq 0$; $W(\pm1)=0$;
for some $\gamma\in (0,1)$, $W''(s)>0$ for all $|s|\geq \gamma$; $W$ has a unique local maximum in $(-1,1)$.
\item[(A2)] For a sequence $\{\varepsilon_i\}_{i=1}^\infty\subset (0,1)$ with $\lim_{i\rightarrow\infty}
\varepsilon_i=0$, 
$\{u_{\varepsilon_i}\}_{i=1}^\infty\subset C^{\infty}(\overline\Omega)$ 
satisfy
\begin{equation}\label{equation1}
-\varepsilon_i \Delta u_{\varepsilon_i} + \dfrac{W^{\prime}(u_{\varepsilon_i})}{\varepsilon_i} = \lambda_{\varepsilon_i}  \mbox{ on} \; \Omega
\end{equation}
for some $\lambda_{\varepsilon_i}\in\mathbb R$.  
\item[(A3)]
There exist constants $C>0$ and $E_0>0$ such that
\begin{equation}
\label{as-energy2}
\sup_i \|u_{\varepsilon_i}\|_{L^{\infty}(\Omega)}\leq C,\,\,\sup_i |\lambda_{\varepsilon_i}|\leq C
\end{equation}
and 
\begin{equation}
\label{as-energy}
\sup_i E_{\varepsilon_i}(u_{\varepsilon_i})\leq E_0.
\end{equation}
\end{itemize}
\begin{rk} 
Assumption (A1) says that $W$ is a
W-shaped function with two non-degenerate 
minima $\pm 1$. The equation \eqref{equation1} means that
$u_{\varepsilon_i}$ is a critical point of $E_{\varepsilon_i}$ with the volume 
constraint \eqref{rest}. Since we are primarily interested in $u_{\varepsilon}$ 
whose values are not far from $[-1,1]$ and whose energy remains $O(1)$, 
\eqref{as-energy2} and \eqref{as-energy} are reasonable assumptions. They
are the same set of assumptions in \cite{HT,RT}. 
\end{rk}
We next summarize
the direct consequences of (A1)-(A3) due to \cite{HT,RT} which give a fairly 
complete characterization of the limiting behavior in the interior of $\Omega$.
We introduce notation and definitions
related to varifolds to describe the results. 
We refer to \cite{A, S} for more information on varifold. 

Let $\mathbf{G}(n, n-1)$ denote the space of $(n-1)$-dimensional subspaces of $\mathbb{R}^n$. 
We also regard $S \in \mathbf{G}(n,n-1)$ as the orthogonal projection of $\mathbb{R}^n$ onto $S$, and write $S_1 \cdot S_2 = {\rm trace}(S_1 \circ S_2)$. 
For open $U\subset\mathbb R^n$, we say $V$ is an $(n-1)$-dimensional varifold in $U$ if $V$ is a Radon measure on $G_{n-1}(U) = U\times \mathbf{G}(n, n-1)$. 
Let $\mathbf{V}_{n-1}(U)$ denote the set of all $(n-1)$-dimensional varifolds. 
Convergence in the varifold sense means convergence in the usual sense of measures. 
For $V\in \mathbf{V}_{n-1}(U)$, we let $\|V\|$ be the weight measure of $V$.
Let ${\rm spt} \|V\|$ be the support of $\|V\|$.  
For $V \in \mathbf{V}_{n-1}(U)$, we define the first variation of $V$ by 
\[ \delta V(g) := \int_{G_{n-1}(U)} \nabla g(x) \cdot S \; dV(x,S) \]
for any vector field $g \in C^1_c ( U; \mathbb{R}^n)$. 
We also write the total variation of $\delta V$ by $\|\delta V\|$. 
If $\|\delta V\|$ is a Radon measure, we may apply the Radon-Nikodym theorem to $\delta V$ with respect to $\|V\|$. 
Writing the singular part of $\|\delta V\|$ with respect to $\|V\|$ as $\|\delta V\|_{\rm sing}$, we have $\|V\|$ measurable vector field $h$, 
$\|\delta V\|$ measurable $\nu_{\rm sing}$ with $|\nu_{\rm sing}|=1$ $\|\delta V\|$-a.e., and a Borel set $Z \subset U$ such that $\|V\|(Z)=0$ with,  
\begin{equation}\label{gene-mean} 
\delta V(g) = - \int_{U} \langle g, h\rangle \; d\|V\| + \int_Z \langle \nu_{\rm sing}, g \rangle \; d\|\delta V\|_{\rm sing} 
\end{equation} 
for all $g \in C^1_c(U; \mathbb{R}^n)$. 
We recall that $h$ is the generalized mean curvature vector of $V$, $\nu_{\rm sing}$ is the (outer-pointing) generalized co-normal of $V$ and 
$Z$ is the generalized boundary of $V$. If $V\in \mathbf{V}_{n-1}(U)$ satisfies
\begin{equation}\label{intvari}
V(\phi)=\int_{M} \phi(x,{\rm Tan}_x\, M)\Theta(x)\, d\mathcal H^{n-1}(x)
\end{equation}
for all $\phi\in C_c(G_{n-1}(U))$, where $M$ is an $\mathcal H^{n-1}$
measurable, countably $n-1$ rectifiable set, ${\rm Tan}_x\, M$ is the approximate
tangent space which exists for $\mathcal H^{n-1}$ a.e.~on $M$, $\Theta: M\rightarrow \mathbb N$ 
is an integer-valued $\mathcal H^{n-1}$ 
measurable function, $V$ is said to be integral. $\mathbf{IV}_{n-1}(U)$ 
denotes the set of all integral varifolds. Note that the $n-1$ dimensional density of $\|V\|$ (denoted by $\Theta(\|V\|,x)$)
exists $\|V\|$ a.e.~and is equal to $\Theta(x)$ in \eqref{intvari}.

Let $u_{\varepsilon_i}$ be the functions defined on $\overline{\Omega}$ satisfying (A1)-(A3). For each
 $u_{\varepsilon_i}$, we define a varifold $V_{\varepsilon_i}\in \mathbf{V}_{n-1}(\mathbb R^n)$ as follows. 
Define a Radon measure $\mu_{\varepsilon_i}$ on $\mathbb{R}^n$ by
\[ d\mu_{\varepsilon_i} := \frac{1}{c_0}\left(\dfrac{{\varepsilon_i}|\nabla u_{\varepsilon_i}|^2}{2} + \dfrac{W(u_{\varepsilon_i})}{{\varepsilon_i}}\right) d\mathcal{L}^n\lfloor_{\Omega}, \]
where $\mathcal{L}^n$ is the Lebesgue measure on $\mathbb{R}^n$ and 
$c_0$ is as in \eqref{c0}. 
Define $V_{\varepsilon_i} \in \mathbf{V}_{n-1}(\mathbb{R}^n)$ by 
\begin{equation*}\label{def-vari} 
V_{\varepsilon_i} (\phi) := \int_{\{|\nabla u_{\varepsilon_i} |\neq 0\}} \phi\left(x,I - \dfrac{\nabla u_{\varepsilon_i}}{|\nabla u_{\varepsilon_i}|} \otimes \dfrac{\nabla u_{\varepsilon_i}}{|\nabla u_{\varepsilon_i}|}\right) d\mu_{\varepsilon_i} 
\end{equation*}
for $\phi \in C_c(G_{n-1}(\mathbb{R}^n))$, where  $I$ is the $n\times n$ identity matrix. 
Then by the definition, we have
\begin{equation}\label{first-vari} 
\delta V_{\varepsilon_i}(g) = \int_{\{|\nabla u_{\varepsilon_i} |\neq 0\}} \nabla g \cdot \left(I - \dfrac{\nabla u_{\varepsilon_i}}{|\nabla u_{\varepsilon_i}|} \otimes \dfrac{\nabla u_{\varepsilon_i}}{|\nabla u_{\varepsilon_i}|}\right) \, d\mu_{\varepsilon_i} 
\end{equation}
for each $g \in C^1_c (\mathbb{R}^n ; \mathbb{R}^n)$. In addition, we define a function 
\[\xi_{\varepsilon_i}:=\frac{1}{c_0}\Big(\frac{\varepsilon_i |\nabla u_{\varepsilon_i}|^2}{2}
-\frac{W(u_{\varepsilon_i})}{\varepsilon_i}\Big)\]
on $\overline\Omega$ and $\xi_{\varepsilon_i}:=0$ on $\mathbb R^n\setminus \overline\Omega$. This is called a discrepancy in the literature. The following two
theorems are
direct consequences of \cite{HT,RT}. 
\begin{thm}\label{mt1}
(\cite[Theorem 1]{HT}) Under the assumptions (A1)-(A3),
there exists a subsequence (denoted by the same index) such that 
\[ \lambda_{\varepsilon_i}\rightarrow\lambda,\,\, u_{\varepsilon_i}\rightarrow
u\mbox{ in }L^1(\Omega),\,\, u\in BV(\Omega),\,\,\,\, V_{\varepsilon_i}\rightarrow V\,\,\mbox{in the varifold sense of $\mathbf{V}_{n-1}(\mathbb R^n)$}, \]\[ |\xi_{\varepsilon_i}|\,d\mathcal L^n
\rightarrow d\xi\,\,\mbox{in the sense of Radon measures on $\mathbb R^n$}.\]
Moreover, 
\begin{itemize}
\item[(1)] $u(x)=\pm 1$ for $\mathcal L^n$ a.e. on $\Omega$,
\item[(2)] $V\lfloor_{G_{n-1}(\Omega)} \in \mathbf{IV}_{n-1}(\Omega)$,
\item[(3)] ${\rm spt}\,\xi\subset \partial \Omega$ and $\xi\leq \|V\|\lfloor_{
\partial\Omega}$,  
\item[(4)] $\Omega\cap {\rm spt}\,\partial^*\{u=1\}\subset {\rm spt}\,\|V\|$ and
$u_{\varepsilon_i}\rightarrow\pm 1$ locally uniformly on $\Omega\setminus
{\rm spt}\,\|V\|$.
\end{itemize}
\end{thm}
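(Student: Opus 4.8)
The plan is to read the statement as a packaging of the diffuse-interface analysis of \cite{HT,RT}: the convergences and item (1) are soft consequences of the energy bounds, while items (2) and (3) in the interior are the deep structure theorems whose hypotheses I would verify and then invoke. First I would extract the subsequences. The bound $|\lambda_{\varepsilon_i}|\le C$ in \eqref{as-energy2} gives $\lambda_{\varepsilon_i}\to\lambda$ by Bolzano--Weierstrass. Since $\|V_{\varepsilon_i}\|(\mathbb R^n)=\mu_{\varepsilon_i}(\mathbb R^n)\le E_0/c_0$ by \eqref{as-energy}, the varifolds $V_{\varepsilon_i}$ and the measures $|\xi_{\varepsilon_i}|\,d\mathcal L^n$ have uniformly bounded mass on the locally compact spaces $G_{n-1}(\mathbb R^n)$ and $\mathbb R^n$, so the Banach--Alaoglu theorem produces $V_{\varepsilon_i}\to V$ in the varifold sense and $|\xi_{\varepsilon_i}|\,d\mathcal L^n\to\xi$ as Radon measures. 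For $u_{\varepsilon_i}\to u$ in $L^1$ I would use the Modica--Mortola substitution: with $\Phi(t):=\int_0^t\sqrt{2W(s)}\,ds$, the inequality $2\sqrt{ab}\le a+b$ gives
\[ |\nabla\Phi(u_{\varepsilon_i})|=\sqrt{2W(u_{\varepsilon_i})}\,|\nabla u_{\varepsilon_i}|\le \frac{\varepsilon_i|\nabla u_{\varepsilon_i}|^2}{2}+\frac{W(u_{\varepsilon_i})}{\varepsilon_i}, \]
so that $\Phi(u_{\varepsilon_i})$ is bounded in $BV(\Omega)$ and, by \eqref{as-energy2}, uniformly bounded in $L^\infty$; $BV$ compactness then yields an $L^1$ limit, and since $\Phi$ is a homeomorphism onto its image the convergence transfers to $u_{\varepsilon_i}\to u:=\Phi^{-1}(\lim\Phi(u_{\varepsilon_i}))\in BV(\Omega)$.

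Item (1) is then immediate, since $\int_\Omega W(u_{\varepsilon_i})\,dx\le\varepsilon_i c_0\,\mu_{\varepsilon_i}(\Omega)\le E_0\varepsilon_i\to0$ forces $W(u)=0$ a.e., hence $u=\pm1$ a.e.\ by (A1). For the remaining items I would record the interior first-variation identity obtained from \eqref{first-vari} by introducing the Allen--Cahn stress tensor $T_{ij}=\big(\frac{\varepsilon_i|\nabla u_{\varepsilon_i}|^2}{2}+\frac{W(u_{\varepsilon_i})}{\varepsilon_i}\big)\delta_{ij}-\varepsilon_i\partial_i u_{\varepsilon_i}\,\partial_j u_{\varepsilon_i}$, whose divergence equals $\lambda_{\varepsilon_i}\nabla u_{\varepsilon_i}$ by \eqref{equation1}; for $g\in C^1_c(\Omega;\mathbb R^n)$ a computation gives
\[ \delta V_{\varepsilon_i}(g)=-\frac{\lambda_{\varepsilon_i}}{c_0}\int_\Omega \nabla u_{\varepsilon_i}\cdot g\,dx+\int_\Omega \xi_{\varepsilon_i}\,\Big(\frac{\nabla u_{\varepsilon_i}}{|\nabla u_{\varepsilon_i}|}\otimes\frac{\nabla u_{\varepsilon_i}}{|\nabla u_{\varepsilon_i}|}\Big):\nabla g\,dx. \]
The first term is harmless because $|\lambda_{\varepsilon_i}|\le C$ and $\int_\Omega|\nabla u_{\varepsilon_i}|\,dx\le C$; the discrepancy term, involving $\nabla g$ and the possibly non-vanishing $\xi_{\varepsilon_i}$, is the genuine obstruction to a uniform first-variation bound.

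The heart of the matter is items (2) and (3) in the interior, and this is the step I expect to be the main obstacle. Two things must be proved: first, that the interior discrepancy vanishes, i.e.\ $\xi\lfloor_\Omega=0$, so that ${\rm spt}\,\xi\subset\partial\Omega$ (the bound $\xi\le\|V\|\lfloor_{\partial\Omega}$ is then soft, since $|\xi_{\varepsilon_i}|\,d\mathcal L^n\le\mu_{\varepsilon_i}$ passes to the limit); and second, that $V\lfloor_{G_{n-1}(\Omega)}$ is integral. Both rest on the monotonicity formula for $\mu_{\varepsilon_i}$ derived from the identity above, from which \cite{HT,RT} obtain rectifiability and, through a blow-up analysis identifying every tangent measure as a finite union of multiplicity-one hyperplanes, the integrality of the density $\Theta(\|V\|,\cdot)$ in $\Omega$. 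I would verify that the hypotheses of these theorems hold---they do, by the bounds of the previous paragraph---and invoke them directly, noting that the contact term $\int_{\partial\Omega}\sigma(u)$ is irrelevant to every interior assertion because all test fields are supported in $\Omega$.

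Finally, item (4) follows from the clean-up lemma of \cite{HT}: at a point $x_0\in\Omega\setminus{\rm spt}\,\|V\|$ the energy density $\mu_{\varepsilon_i}$ is small on a fixed neighborhood, which through \eqref{equation1}, the $L^\infty$ bound and elliptic estimates forces $u_{\varepsilon_i}\to+1$ or $u_{\varepsilon_i}\to-1$ locally uniformly there. The inclusion $\Omega\cap{\rm spt}\,\partial^*\{u=1\}\subset{\rm spt}\,\|V\|$ is then the contrapositive: if $x_0\notin{\rm spt}\,\|V\|$ then $u$ equals a single constant $\pm1$ a.e.\ near $x_0$, so $x_0$ cannot belong to the reduced boundary of $\{u=1\}$.
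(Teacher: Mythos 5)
Your proposal is correct and follows essentially the same route as the paper: the paper offers no independent proof of this theorem, stating it as a direct consequence of \cite{HT} (together with \cite{RT}), and your argument --- elementary compactness and the Modica--Mortola substitution for the subsequential convergences and item (1), with the deep interior facts (vanishing of the interior discrepancy, integrality of the limit varifold, and the clean-up lemma yielding item (4)) invoked from \cite{HT,RT} --- is precisely how that citation unpacks. One small slip, immaterial here because the identity is only motivational: your displayed first-variation formula drops the contribution of the set $\{|\nabla u_{\varepsilon_i}|=0\}$, namely the term $-c_0^{-1}\int_{\Omega\cap\{|\nabla u_{\varepsilon_i}|=0\}}\varepsilon_i^{-1}W(u_{\varepsilon_i})\,{\rm div}\,g\,dx$ (the analogue of $I^\varepsilon_2$ in \eqref{first-vari2}), which is likewise controlled by $\int_\Omega |\xi_{\varepsilon_i}|\,|\nabla g|\,dx$ and so does not affect your conclusion that the discrepancy is the sole obstruction to a uniform interior first-variation bound.
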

By the well-known property of $BV$ functions (see for example \cite{EG}), 
away from the reduced boundary 
\[M:=\Omega\cap \partial^*\{u=1\}\]
of $\{u=1\}$ in $\Omega$, we may define
$u(x)\in \{\pm 1\}$ for $\mathcal H^{n-1}$ a.e.~$x\in \Omega\setminus M$.  
We also write $\nabla u/|\nabla u|$ which exists for $\mathcal H^{n-1}$ a.e.~on
$M$ as the inward-pointing unit normal to $\partial^*\{u=1\}$. 
\begin{thm}\label{mt2}
(\cite[Theorem 3.2]{RT}) Let $\lambda, u, V, M$ be as above. Then we have
the following. 
\begin{itemize}
\item[(a)] $V\lfloor_{G_{n-1}(\Omega)}$ (as an element of $\mathbf{V}_{n-1}(\Omega)$)
has a generalized mean curvature $h$ with $\|\delta V\|_{\rm sing}=0$ in $\Omega$.  
We have $\mathcal H^{n-1}(M\setminus {\rm spt}\,\|V\|)=0$. 
\item[(b)] $V$ has a locally constant mean curvature in $\Omega$, namely, 
\[h=\left\{\begin{array}{ll} \frac{2\lambda}{c_0}\frac{\nabla u}{|\nabla u|} & \mathcal H^{n-1}\, a.e.\mbox{ on } M,\\
0 & \mathcal H^{n-1} a.e.\mbox{ on } {\rm spt}\,\|V\|\cap\Omega\setminus
M\end{array}\right.\]
and
\[\Theta(\|V\|,x)=\left\{\begin{array}{ll} \mbox{odd} & \mathcal H^{n-1}\, a.e.\mbox{ on } M, \\
\mbox{even} & \mathcal H^{n-1} \, a.e.\mbox{ on }{\rm spt}\,\|V\|\cap \Omega \setminus 
M. \end{array} \right.\]
\item[(c)] If $\lambda\neq 0$, then ``odd'' in (b) is replaced by ``1''. 
\item[(d)] If $\lambda>0$, then $\mathcal H^{n-1}(\{u=1\}\cap{\rm spt}\,\|V\|
\cap \Omega\setminus M)=0$. If $\lambda<0$, then $\mathcal H^{n-1}(\{u=-1\}\cap{\rm spt}\,\|V\|
\cap \Omega\setminus M)=0$.
\end{itemize}
\end{thm}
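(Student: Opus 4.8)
\emph{Proof plan.} The plan is to begin from an interior first variation identity for the approximating varifolds $V_{\varepsilon_i}$ and then pass to the limit. Using \eqref{first-vari}, integration by parts and the Euler--Lagrange equation \eqref{equation1}, one obtains for every $g\in C^1_c(\Omega;\mathbb R^n)$ that
\[ c_0\,\delta V_{\varepsilon_i}(g)=-\lambda_{\varepsilon_i}\int_\Omega \nabla u_{\varepsilon_i}\cdot g\;dx+c_0\int_\Omega \xi_{\varepsilon_i}\,\Big(\tfrac{\nabla u_{\varepsilon_i}}{|\nabla u_{\varepsilon_i}|}\otimes\tfrac{\nabla u_{\varepsilon_i}}{|\nabla u_{\varepsilon_i}|}\Big):\nabla g\;dx, \]
the boundary contributions dropping out since $g$ is compactly supported in $\Omega$. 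Letting $i\to\infty$, the discrepancy term vanishes because ${\rm spt}\,\xi\subset\partial\Omega$ by Theorem \ref{mt1}(3) while ${\rm spt}\,g$ stays away from $\partial\Omega$; and since $u_{\varepsilon_i}\to u$ in $L^1$, $\lambda_{\varepsilon_i}\to\lambda$, and $Du=2\,\tfrac{\nabla u}{|\nabla u|}\,\mathcal H^{n-1}\lfloor M$, the first term converges. Using $\delta V_{\varepsilon_i}(g)\to\delta V(g)$ from the varifold convergence, this yields the basic identity
\[ c_0\,\delta V(g)=-2\lambda\int_M \tfrac{\nabla u}{|\nabla u|}\cdot g\;d\mathcal H^{n-1}\qquad(g\in C^1_c(\Omega;\mathbb R^n)). \]

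For (a) I would first record the energy lower bound $\|V\|\lfloor\Omega\ge\mathcal H^{n-1}\lfloor M$: with $\Phi(s):=\int_{-1}^s\sqrt{2W(\tau)}\,d\tau$ the inequality $\tfrac{\varepsilon_i|\nabla u_{\varepsilon_i}|^2}{2}+\tfrac{W(u_{\varepsilon_i})}{\varepsilon_i}\ge|\nabla\Phi(u_{\varepsilon_i})|$, the convergence $\Phi(u_{\varepsilon_i})\to\Phi(u)$ in $L^1$, and lower semicontinuity of the total variation give $\|V\|\ge\tfrac{1}{c_0}|D\Phi(u)|=\mathcal H^{n-1}\lfloor M$, since the jump of $\Phi(u)$ across $M$ equals $c_0$. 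In particular $\mathcal H^{n-1}\lfloor M\ll\|V\|$, so $\mathcal H^{n-1}(M\setminus{\rm spt}\,\|V\|)=0$. The displayed identity then shows $\|\delta V\|\lfloor\Omega=\tfrac{2|\lambda|}{c_0}\mathcal H^{n-1}\lfloor M$ is a Radon measure absolutely continuous with respect to $\|V\|$, so Radon--Nikodym applied to \eqref{gene-mean} produces a generalized mean curvature $h$ with $\|\delta V\|_{\rm sing}=0$ in $\Omega$.

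For (b), comparing the identity with \eqref{gene-mean} and using $\|V\|\lfloor M=\Theta(\|V\|,\cdot)\,\mathcal H^{n-1}\lfloor M$ (rectifiability and integrality, Theorem \ref{mt1}(2)) shows $h=0$ on ${\rm spt}\,\|V\|\cap\Omega\setminus M$ and $h\,\Theta(\|V\|,\cdot)=\tfrac{2\lambda}{c_0}\tfrac{\nabla u}{|\nabla u|}$ $\mathcal H^{n-1}$-a.e.\ on $M$. The parity statement I would obtain from a blow-up argument along the lines of \cite{HT}: at $\mathcal H^{n-1}$-a.e.\ $x_0\in{\rm spt}\,\|V\|\cap\Omega$ the approximate tangent plane exists, and rescaling both $V_{\varepsilon_i}$ and $u_{\varepsilon_i}$ at $x_0$, the vanishing of the discrepancy forces the rescaled $u_{\varepsilon_i}$ to converge to a finite superposition of one-dimensional optimal transition profiles stacked along the normal line, their number being exactly $\Theta(\|V\|,x_0)$. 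The net phase change along that line matches the boundary values of $u$, hence is nonzero precisely when $x_0\in M$, giving $\Theta$ odd on $M$ and even on the remainder.

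The main obstacle is (c) and (d), which hinge on the sign of the chemical potential $\lambda$. For (d) with $\lambda>0$ I would argue at the $\varepsilon_i$-level: near a point of ${\rm spt}\,\|V\|\cap\Omega\setminus M$ lying in the phase $\{u=1\}$ the values of $u_{\varepsilon_i}$ are near $+1$, and a barrier comparison based on \eqref{equation1} with the favorable sign of $\lambda$ forces $u_{\varepsilon_i}\to 1$ locally uniformly there, contradicting energy concentration; this gives $\mathcal H^{n-1}(\{u=1\}\cap{\rm spt}\,\|V\|\cap\Omega\setminus M)=0$, and symmetrically for $\lambda<0$. Granting (d), statement (c) follows because the even excess of $\Theta$ on $M$ beyond the primary unit layer would be carried by folded, zero-mean-curvature layers whose inner phase is excluded once $\lambda\neq0$, forcing $\Theta\equiv1$ on $M$ and upgrading the formula in (b) to $h=\tfrac{2\lambda}{c_0}\tfrac{\nabla u}{|\nabla u|}$. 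Making the barrier construction and the exclusion of folded layers quantitative and uniform in $\varepsilon_i$ is the technically delicate step.
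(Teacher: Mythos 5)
First, a framing point: the paper itself contains no proof of this theorem. It is quoted verbatim from \cite[Theorem 3.2]{RT}, presented ``for the convenience of the reader,'' and explicitly not used in the sequel; so the benchmark is R\"{o}ger--Tonegawa's proof, not anything in this paper. Against that benchmark, your plan correctly handles the routine half. The interior first variation identity you display is essentially the interior case of \eqref{first-vari2} (with the caveat that on $\{|\nabla u_{\varepsilon_i}|=0\}$ the projection $\frac{\nabla u_{\varepsilon_i}}{|\nabla u_{\varepsilon_i}|}\otimes\frac{\nabla u_{\varepsilon_i}}{|\nabla u_{\varepsilon_i}|}$ is undefined and that piece must be kept as a separate term, cf.\ $I_2^{\varepsilon}$); the passage to the limit, the lower bound $\|V\|\geq \mathcal H^{n-1}\lfloor_{M}$ via $\Phi$, and the resulting absolute continuity of $\delta V$ do give (a), together with $h=0$ off $M$. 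But note what your identity yields on $M$: as you yourself write, it is $h\,\Theta(\|V\|,\cdot)=\frac{2\lambda}{c_0}\frac{\nabla u}{|\nabla u|}$, whereas the theorem asserts $h=\frac{2\lambda}{c_0}\frac{\nabla u}{|\nabla u|}$. For $\lambda\neq 0$ the two statements differ exactly by the multiplicity, so (b) in its stated strength is equivalent to (c); and (b)--(d) together \emph{are} the content of RT's paper (the Gibbs--Thomson law with multiplicity control), not a formal consequence of the first variation identity. Your proposal therefore establishes (a), the weak form of (b), and a plausible (if heavy) blow-up route to the parity claim, while leaving the characteristic assertions unproven.

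The genuine gap is in (c) and (d), and your heuristic for (d) starts from a false premise: near a point of ${\rm spt}\,\|V\|\cap\Omega\setminus M$ lying in $\{u=1\}$, the values of $u_{\varepsilon_i}$ are \emph{not} near $+1$ --- energy concentration at such a point means precisely that $u_{\varepsilon_i}$ executes folded transition layers, dipping from near $+1$ toward $-1$ and back; if $u_{\varepsilon_i}$ were uniformly close to $+1$ there would be no concentration to contradict, so no barrier argument can begin from that assumption. The actual rigidity comes from the first integral of \eqref{equation1}: in one dimension $\frac{\varepsilon|u'|^2}{2}-\frac{W(u)}{\varepsilon}+\lambda u$ is constant, so a profile with $u\to 1$ at both ends and an interior minimum $q_{\min}<1$ would force $W(q_{\min})/\varepsilon=\lambda(q_{\min}-1)<0$ when $\lambda>0$, which is impossible; the bulk of RT's work is transferring this one-dimensional rigidity to almost-flat stacked layers in $n$ dimensions via blow-up, vanishing discrepancy, and delicate estimates, none of which your plan supplies. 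Finally, (c) does not follow from (d) as you claim: a point of $M$ with $\Theta\geq 3$ belongs to $M$, not to ${\rm spt}\,\|V\|\cap\Omega\setminus M$, so (d) is silent about it, and there need be no hidden boundary nearby; one needs a separate argument that two of the stacked layers at such a point form a fold whose interior phase is again excluded by the sign of $\lambda$. So (b) (full strength), (c) and (d) remain open in your proposal.
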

The portion of ``even multiplicity part'' ${\rm spt}\,\|V\|\cap \Omega\setminus M$ may be
regarded as a hidden boundary, in the sense that it does not appear as a boundary
of $\{u=1\}$. 
Just to clarify the point of above claim, consider the case when $\lambda=0$. Then
(b) says that $V$ is stationary in $\Omega$ with the density parity as described. 
If $\lambda>0$, then the even multiplicity part which has 0 mean curvature
only appears (if it does exist non-trivially) in the region of $\{u=-1\}$ due to (d). 
In the following, Theorem \ref{mt2} is not used and it is presented for the convenience
of the reader. 
\begin{rk}
It is important to note for the following section that \cite[Theorem 1]{HT}
proves $|\xi_{\varepsilon_i}|\rightarrow 0$ on $\Omega$. This leaves the possibility
of having non-trivial measure $\xi$ living only on $\partial \Omega$. 
When $\Omega$ is strictly convex and $\sigma=0$, 
it is proved that $\xi=0$ in \cite{MT}. We conjecture that $\xi=0$ also for non-trivial 
$\sigma$ and under some geometric condition (such as convexity) on $\Omega$. Due to 
the trivial inequality $\xi\leq \|V\|$, if $\|V\|\lfloor_{
\partial\Omega}=0$, then we have $\xi=0$. Thus, if the measures $\mu_{\varepsilon_i}$ 
do not concentrate on $\partial\Omega$, we have $\xi=0$ in particular. 
\label{disrk}
\end{rk}
\section{boundary behavior}\label{main}
In addition to (A1)-(A3) in the previous section, we now consider the following
three assumptions. 
\begin{itemize}
\item[(A4)] A given function $\sigma\in C^{\infty}(\mathbb R)$ satisfies 
\begin{equation}\label{as-func}
|\sigma^{\prime}(s)| \le C_1\sqrt{2 W(s)}
\end{equation} 
for some $C_1\in [0,1)$ and for all $s\in\mathbb R$. 
\item[(A5)] The functions $\{u_{\varepsilon_i}\}$ as in (A2) satisfy
\begin{equation}\label{equation2}
\varepsilon_i \langle \nabla u_{\varepsilon_i}, \nu \rangle = - \sigma^{\prime}(u_{\varepsilon_i}) \mbox{ on} \; \partial \Omega, 
\end{equation}
where $\nu$ is the outer unit normal vector field on $\partial \Omega$.
\item[(A6)] $\xi=0$, where $\xi$ is as in Theorem \ref{mt1} (3). 
\end{itemize}
From a heuristic argument as well as the $\Gamma$-convergence result of \cite{M2}, note that we expect the energy $E_{\varepsilon}$
should behave like
\[E_{\varepsilon}(u_{\varepsilon})\approx c_0 \mathcal H^{n-1}(\Omega\cap
\partial\{u=1\})+(\sigma(1)-\sigma(-1))\mathcal H^{n-1}(\partial\Omega\cap \{u=1\})+{\rm Constant}.\]
Imposing (A4) ensures that $|\sigma(1)-\sigma(-1)|\leq \int_{-1}^1
|\sigma'(s)|\, ds\leq C_1\int_{-1}^1\sqrt{2W(s)}\, ds<c_0$. Physically, 
this ensures that the contact energy density $|\sigma(1)-\sigma(-1)|$ of the interface 
$\{u_{\varepsilon}\approx 1\}$ with $\partial \Omega$ 
is strictly smaller than the surface tension
density $c_0$ of the interface inside of $\Omega$. As $|\sigma(1)-\sigma(-1)|\nearrow
c_0$, we expect to have a ``perfect wetting'' (see \cite{C}) of the interface. The equality
\eqref{equation2} is satisfied for critical points of \eqref{energy} with the volume
constraint \eqref{rest}, as one can check easily by taking the first variation of
$E_{\varepsilon_i}$. For (A6), as mentioned in Remark \ref{disrk}, we do not know
in general that this is satisfied under the assumptions (A1)-(A5). 
However, it is a reasonable assumption since we expect
$\|V\|\lfloor_{\partial\Omega}=0$ (and thus $\xi\leq \|V\|\lfloor_{\partial\Omega}=0$)
unless the situation is somewhat pathological. We also note that 
adding the stability assumption (that is, the second variation of $E_{\varepsilon}$ is non-negative)
does not appear helpful to show $\xi=0$ on $\partial \Omega$, despite the result of 
$\Gamma$-convergence of \cite{M2}.    

In the following, we first describe the behavior of $u_{\varepsilon_i}\lfloor_{\partial
\Omega}$. 
\begin{thm}\label{behavior-u}
Under the assumptions (A1)-(A5) (thus leaving out (A6)), there exist a subsequence
(denoted by the same index) and a function $\tilde{u} \in BV(\partial \Omega)$ 
such that
\begin{align*}
&u_{\varepsilon_i}\lfloor_{\partial \Omega} \to \tilde{u} \; \; \mathcal{H}^{n-1}\,
a.e.\mbox{ on } \partial \Omega, 
\\
&\tilde{u} = \pm1 \; \; \mathcal{H}^{n-1}\, a.e.\mbox{ on} \; \partial \Omega, 
\end{align*}
where $u_{\varepsilon_i} \lfloor_{\partial \Omega}$ is the restriction of $u_{\varepsilon_i}$ to $\partial \Omega$. 
\end{thm}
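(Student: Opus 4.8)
The plan is to base everything on a single uniform bound for the \emph{boundary energy}, namely
\[ \sup_i \int_{\partial\Omega}\Big(\frac{\varepsilon_i}{2}|\nabla^{\partial\Omega}u_{\varepsilon_i}|^2+\frac{W(u_{\varepsilon_i})}{\varepsilon_i}\Big)\,d\mathcal H^{n-1}<\infty, \]
where $\nabla^{\partial\Omega}$ is the tangential gradient on $\partial\Omega$. Granting this, set $\Phi(s):=\int_0^s\sqrt{2W(t)}\,dt$; by (A1) $\Phi$ is a continuous strictly increasing homeomorphism of $\mathbb R$, and $|\nabla^{\partial\Omega}\Phi(u_{\varepsilon_i})|=\sqrt{2W(u_{\varepsilon_i})}\,|\nabla^{\partial\Omega}u_{\varepsilon_i}|\le \frac{\varepsilon_i}{2}|\nabla^{\partial\Omega}u_{\varepsilon_i}|^2+\frac{W(u_{\varepsilon_i})}{\varepsilon_i}$ by Young's inequality. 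Hence $\Phi(u_{\varepsilon_i})\lfloor_{\partial\Omega}$ is bounded in $BV(\partial\Omega)$ (the $L^1$ bound being trivial from (A3)), and since $\partial\Omega$ is a compact manifold the embedding $BV(\partial\Omega)\hookrightarrow L^1(\partial\Omega)$ is compact. I would extract a further subsequence with $\Phi(u_{\varepsilon_i})\lfloor_{\partial\Omega}\to\tilde w$ in $L^1(\partial\Omega)$ and $\mathcal H^{n-1}$-a.e., with $\tilde w\in BV(\partial\Omega)$; applying the continuous inverse $\Phi^{-1}$ gives $u_{\varepsilon_i}\lfloor_{\partial\Omega}\to\tilde u:=\Phi^{-1}(\tilde w)$ $\mathcal H^{n-1}$-a.e. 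Finally, the same bound yields $\int_{\partial\Omega}W(u_{\varepsilon_i})\,d\mathcal H^{n-1}\le C\varepsilon_i\to0$, so $W(\tilde u)=0$ $\mathcal H^{n-1}$-a.e.\ by Fatou's lemma and $W\ge0$, i.e.\ $\tilde u=\pm1$ a.e.; and then $\tilde w=\Phi(\tilde u)$ takes only the two values $\Phi(\pm1)$, so $\{\tilde u=1\}$ has finite perimeter in $\partial\Omega$ and $\tilde u\in BV(\partial\Omega)$.

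To produce the boundary energy bound I would use the stress--energy (Pohozaev) tensor $T_{jk}:=\varepsilon_i\partial_j u_{\varepsilon_i}\partial_k u_{\varepsilon_i}-\delta_{jk}\big(\frac{\varepsilon_i|\nabla u_{\varepsilon_i}|^2}{2}+\frac{W(u_{\varepsilon_i})}{\varepsilon_i}\big)$, whose divergence, by the equation \eqref{equation1}, is $\sum_j\partial_j T_{jk}=-\lambda_{\varepsilon_i}\partial_k u_{\varepsilon_i}$. Choosing a fixed smooth vector field $X$ on $\overline\Omega$ with $X=\nu$ on $\partial\Omega$ and integrating by parts yields $\int_{\partial\Omega}(T\nu)\cdot\nu\,d\mathcal H^{n-1}=\int_\Omega T:\nabla X\,dx-\lambda_{\varepsilon_i}\int_\Omega X\cdot\nabla u_{\varepsilon_i}\,dx$. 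A direct computation on $\partial\Omega$, decomposing $\nabla u_{\varepsilon_i}$ into normal and tangential parts and using \eqref{equation2} in the form $\varepsilon_i\langle\nabla u_{\varepsilon_i},\nu\rangle=-\sigma'(u_{\varepsilon_i})$, gives $(T\nu)\cdot\nu=\frac{\varepsilon_i}{2}\langle\nabla u_{\varepsilon_i},\nu\rangle^2-\frac{\varepsilon_i}{2}|\nabla^{\partial\Omega}u_{\varepsilon_i}|^2-\frac{W(u_{\varepsilon_i})}{\varepsilon_i}$. Since (A4) forces $\frac{\varepsilon_i}{2}\langle\nabla u_{\varepsilon_i},\nu\rangle^2=\frac{\sigma'(u_{\varepsilon_i})^2}{2\varepsilon_i}\le C_1^2\frac{W(u_{\varepsilon_i})}{\varepsilon_i}$, rearranging gives
\[ \int_{\partial\Omega}\Big(\frac{\varepsilon_i}{2}|\nabla^{\partial\Omega}u_{\varepsilon_i}|^2+(1-C_1^2)\frac{W(u_{\varepsilon_i})}{\varepsilon_i}\Big)\,d\mathcal H^{n-1}\le-\int_\Omega T:\nabla X\,dx+\lambda_{\varepsilon_i}\int_\Omega X\cdot\nabla u_{\varepsilon_i}\,dx. \]
The right-hand side is bounded uniformly in $i$: each entry of $T$ is bounded by $3(\frac{\varepsilon_i|\nabla u_{\varepsilon_i}|^2}{2}+\frac{W(u_{\varepsilon_i})}{\varepsilon_i})$, so the first term is controlled by the interior energy, which is bounded by (A3) (after absorbing the bounded term $\int_{\partial\Omega}\sigma(u_{\varepsilon_i})$); and the second term, after a further integration by parts $\int_\Omega X\cdot\nabla u_{\varepsilon_i}=\int_{\partial\Omega}u_{\varepsilon_i}-\int_\Omega u_{\varepsilon_i}\,\dive X$, is controlled by $|\lambda_{\varepsilon_i}|$ and $\|u_{\varepsilon_i}\|_{L^\infty}$, both bounded by (A3). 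Because $C_1<1$, the coefficient $1-C_1^2$ is positive, so both nonnegative terms on the left are separately bounded, which is exactly the boundary energy estimate.

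I expect the crux to be the tangential estimate $\int_{\partial\Omega}\varepsilon_i|\nabla^{\partial\Omega}u_{\varepsilon_i}|^2\,d\mathcal H^{n-1}\le C$: a naive trace inequality only bounds $\int_{\partial\Omega}W(u_{\varepsilon_i})$ (not its $\varepsilon_i^{-1}$-weighted version) and gives no control of tangential derivatives, so it cannot by itself force the limit to be $\pm1$. The stress-tensor identity resolves this by converting boundary quantities into bounded interior integrals, with the strict inequality $C_1<1$ in (A4) furnishing the decisive positive sign. The only other points requiring care are the extra integration by parts in the $\lambda_{\varepsilon_i}$-term (needed to avoid the uncontrolled $\int_\Omega|\nabla u_{\varepsilon_i}|=O(\varepsilon_i^{-1/2})$) and the verification that $X=\nu$ admits a smooth extension with bounded $\nabla X,\dive X$. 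Notably this argument does not invoke Theorem \ref{mt1}, so the boundary behavior is in fact self-contained.
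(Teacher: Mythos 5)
Your proposal is correct and is essentially the paper's own argument: the paper's Lemma preceding the theorem (Lemma \ref{lem:bo-ener}) proves the same uniform boundary energy bound by testing the identity \eqref{first-vari2} --- which is exactly your integrated stress--energy tensor identity written in varifold language, since $c_0\,\delta V_\varepsilon(g)=-\int_\Omega T:\nabla g\,dx$ up to the discrepancy term --- with $g=\nabla f$, where $\nabla f=\nu$ on $\partial\Omega$, and absorbs the $\sigma'$ contribution using (A4) with $C_1<1$ just as you do (the paper via Young's inequality with factor $C_1$, you via the boundary condition pointwise with factor $C_1^2$; the paper keeps the full gradient while you keep only the tangential part, but either version suffices). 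The remainder of your argument --- the map $\Phi$, the $BV(\partial\Omega)$ compactness giving $\mathcal H^{n-1}$-a.e.\ convergence, and Fatou's lemma with $\int_{\partial\Omega}W(u_{\varepsilon_i})\,d\mathcal H^{n-1}\to 0$ forcing $\tilde u=\pm1$ --- coincides with the paper's proof of Theorem \ref{behavior-u}.
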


In general, the trace of $u$ (obtained in Theorem \ref{mt1}) on
$\partial \Omega$ may not coincide with $\tilde u$, as one can construct a sequence of 
critical points of $E_{\varepsilon}$ with $\sigma=0$ which converge to $u=1$ on 
$\Omega$ while $u_{\varepsilon}\lfloor_{\partial\Omega}\approx -1$ (see \cite[Section 8]{MT}).
The next result is the main theorem of the paper. 
\begin{thm}\label{thm:first-variation}
Under the assumptions (A1)-(A6), let $V$ be as in Theorem \ref{mt1} and let $\tilde u$
be as in Theorem \ref{behavior-u}.  Then we have the following. Let $\theta$ be defined
as in \eqref{gam-cont-ang}.
\begin{itemize}
\item[(1)] 
The total variation $\|\delta V\|(\mathbb R^n)=\|\delta V\|(\overline\Omega)$ (as an element of $\mathbf{V}_{n-1}(\mathbb R^n)$)
is finite. 
\item[(2)]
For any vector field $g \in C(\partial\Omega; \mathbb{R}^n)$ such that $\langle g, \nu\rangle = 0$ on $\partial \Omega$, we have
\begin{equation}\label{tan-first} 
\delta V\lfloor_{\partial \Omega}(g) = \cos\theta\int_{\partial^*\{x\in \partial
\Omega \,:\, \tilde u(x)=1\}} \langle g,\tau\rangle \; d\mathcal{H}^{n-2}, 
\end{equation}
where $\tau(x)\in {\rm Tan}_x(\partial \Omega)$ is the $\mathcal H^{n-2}$ measurable 
unit inward-pointing normal to
$\partial^*\{x\in \partial\Omega \, :\, \tilde u(x)=1\}$ which exists $\mathcal H^{n-2}$ a.e.~
on $\partial^*\{x\in \partial\Omega \, :\, \tilde u(x)=1\}$. 
\end{itemize}
\end{thm}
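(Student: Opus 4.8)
The plan is to convert the exact first variation \eqref{first-vari} into an expression in which the boundary contribution is explicit, and then pass to the limit. First I would introduce the stress--energy tensor
\[ T_{\varepsilon_i}:=\Big(\frac{\varepsilon_i|\nabla u_{\varepsilon_i}|^2}{2}+\frac{W(u_{\varepsilon_i})}{\varepsilon_i}\Big)I-\varepsilon_i\nabla u_{\varepsilon_i}\otimes\nabla u_{\varepsilon_i}, \]
defined on all of $\Omega$. A direct computation using \eqref{equation1} gives $\dive\,T_{\varepsilon_i}=\lambda_{\varepsilon_i}\nabla u_{\varepsilon_i}$, and on $\{|\nabla u_{\varepsilon_i}|\neq0\}$ the integrand of \eqref{first-vari} equals $\nabla g:T_{\varepsilon_i}+c_0\xi_{\varepsilon_i}\,\tfrac{\nabla u_{\varepsilon_i}}{|\nabla u_{\varepsilon_i}|}\cdot\nabla g\cdot\tfrac{\nabla u_{\varepsilon_i}}{|\nabla u_{\varepsilon_i}|}$. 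Integrating by parts, evaluating $T_{\varepsilon_i}\nu$ on $\partial\Omega$ by means of the contact condition \eqref{equation2} (which yields $T_{\varepsilon_i}\nu=(\tfrac{\varepsilon_i|\nabla u_{\varepsilon_i}|^2}{2}+\tfrac{W(u_{\varepsilon_i})}{\varepsilon_i})\nu+\sigma'(u_{\varepsilon_i})\nabla u_{\varepsilon_i}$), and splitting $\nabla u_{\varepsilon_i}$ on $\partial\Omega$ into tangential and normal parts with \eqref{equation2} once more, I arrive at the master identity
\[ c_0\delta V_{\varepsilon_i}(g)=-\lambda_{\varepsilon_i}\int_\Omega\langle\nabla u_{\varepsilon_i},g\rangle+\int_{\partial\Omega}\big(B_{\varepsilon_i}\langle g,\nu\rangle+\langle g,\nabla^{\partial\Omega}\sigma(u_{\varepsilon_i})\rangle\big)\,d\mathcal H^{n-1}+R_{\varepsilon_i}(g) \]
for all $g\in C^1_c(\mathbb R^n;\mathbb R^n)$, where $\nabla^{\partial\Omega}$ is the tangential gradient, $B_{\varepsilon_i}:=\frac{\varepsilon_i|\nabla^{\partial\Omega}u_{\varepsilon_i}|^2}{2}+\frac{W(u_{\varepsilon_i})}{\varepsilon_i}-\frac{\sigma'(u_{\varepsilon_i})^2}{2\varepsilon_i}$, and the discrepancy remainder obeys $|R_{\varepsilon_i}(g)|\leq C\|\nabla g\|_\infty\int_\Omega|\xi_{\varepsilon_i}|\,d\mathcal L^n$. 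Crucially, assumption (A4) forces $B_{\varepsilon_i}\geq(1-C_1^2)\frac{W(u_{\varepsilon_i})}{\varepsilon_i}\geq0$.

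The heart of the argument, and what I expect to be the main obstacle, is a uniform bound on the boundary energy $\int_{\partial\Omega}B_{\varepsilon_i}\,d\mathcal H^{n-1}$. I would obtain it by testing the master identity with $g=\phi\tilde\nu$, where $\tilde\nu$ is a smooth extension of $\nu$ off $\partial\Omega$ and $0\leq\phi\in C^1_c$ equals $1$ near $\partial\Omega$. Since $\nabla^{\partial\Omega}\sigma(u_{\varepsilon_i})$ is tangential, the $\sigma$-term vanishes and
\[ \int_{\partial\Omega}B_{\varepsilon_i}\phi\,d\mathcal H^{n-1}=c_0\delta V_{\varepsilon_i}(\phi\tilde\nu)+\lambda_{\varepsilon_i}\int_\Omega\langle\nabla u_{\varepsilon_i},\phi\tilde\nu\rangle-R_{\varepsilon_i}(\phi\tilde\nu). \]
The first term on the right is $\leq C\|\nabla(\phi\tilde\nu)\|_\infty\mu_{\varepsilon_i}(\mathbb R^n)$, bounded by (A3); the second, after moving the derivative onto $\phi\tilde\nu$, is controlled by $\|u_{\varepsilon_i}\|_{L^\infty}$; and $|R_{\varepsilon_i}(\phi\tilde\nu)|\leq C\int_\Omega|\xi_{\varepsilon_i}|\leq C$. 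As $B_{\varepsilon_i}\geq0$, taking $\phi\equiv1$ near $\partial\Omega$ gives $\sup_i\int_{\partial\Omega}B_{\varepsilon_i}\leq C$, and then $\sup_i\int_{\partial\Omega}|\nabla^{\partial\Omega}\sigma(u_{\varepsilon_i})|\leq C$ by (A4) and Young's inequality. It is precisely the strict inequality $C_1<1$ that makes $B_{\varepsilon_i}$ nonnegative, so that the normal-field test produces a one-sided bound that closes rather than an identity that might not.

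With this in hand, part (1) follows by passing to the limit in the master identity. Varifold convergence gives $\delta V_{\varepsilon_i}(g)\to\delta V(g)$; moreover $u_{\varepsilon_i}\to u$ in $L^1(\Omega)$, $u_{\varepsilon_i}|_{\partial\Omega}\to\tilde u$ $\mathcal H^{n-1}$-a.e.\ (Theorem \ref{behavior-u}), and $R_{\varepsilon_i}(g)\to0$ by (A6) together with Theorem \ref{mt1}(3). After the $L^1$ limit and the $BV$ Gauss--Green formula the interior term becomes $-\lambda\int_\Omega\langle g,Du\rangle+\lambda\int_{\partial\Omega}(\mathrm{tr}\,u-\tilde u)\langle g,\nu\rangle$, so every term is now bounded by $C\|g\|_{C^0}$: the interior one by the finite perimeter $|Du|(\Omega)$, and the boundary ones by $|\mathrm{tr}\,u-\tilde u|\leq2$ and the uniform bounds on $\int_{\partial\Omega}B_{\varepsilon_i}$ and $\int_{\partial\Omega}|\nabla^{\partial\Omega}\sigma(u_{\varepsilon_i})|$ just established. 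Hence $|\delta V(g)|\leq C\|g\|_{C^0}$, and the Riesz representation theorem shows $\|\delta V\|$ is a finite Radon measure; since $\|V\|$ is carried by $\overline\Omega$, $\|\delta V\|(\mathbb R^n)=\|\delta V\|(\overline\Omega)$.

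For part (2) I take $g$ tangential, so $\langle g,\nu\rangle=0$ kills the $B_{\varepsilon_i}$-term outright. Using part (1) to split $\delta V=\delta V\lfloor_\Omega+\delta V\lfloor_{\partial\Omega}$ and matching the interior piece $-\lambda\lim_i\int_\Omega\langle\nabla u_{\varepsilon_i},g\rangle$ with $c_0\delta V\lfloor_\Omega(g)$ (the interior measures $-\lambda Du$ and $-c_0h\|V\|$ agreeing on $\Omega$ by the interior identity and Theorem \ref{mt2}), what remains is $c_0\delta V\lfloor_{\partial\Omega}(g)=\lim_i\int_{\partial\Omega}\langle g,\nabla^{\partial\Omega}\sigma(u_{\varepsilon_i})\rangle$. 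Integrating by parts on the closed manifold $\partial\Omega$ and using $\sigma(u_{\varepsilon_i})\to\sigma(\tilde u)$ in $L^1(\partial\Omega)$, this equals $-\int_{\partial\Omega}\sigma(\tilde u)\,\dive_{\partial\Omega}g$; writing $\sigma(\tilde u)=\sigma(-1)+(\sigma(1)-\sigma(-1))\chi_{\{\tilde u=1\}}$, the constant drops against a tangential field on a closed manifold, and the Gauss--Green formula for the finite-perimeter set $\{\tilde u=1\}\subset\partial\Omega$ turns the rest into $(\sigma(1)-\sigma(-1))\int_{\partial^*\{\tilde u=1\}}\langle g,\tau\rangle\,d\mathcal H^{n-2}$, with $\tau$ the inward conormal. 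Dividing by $c_0$ and recalling \eqref{gam-cont-ang} yields \eqref{tan-first} for $g\in C^1$; since both sides are continuous in $\|g\|_{C^0}$ (part (1) on the left, finiteness of the perimeter of $\{\tilde u=1\}$ on the right), a uniform approximation extends it to all continuous tangential $g$. The clean separation of interior and boundary parts for merely tangential (not interior-supported) test fields is where (A6) is indispensable, since a nonvanishing interior discrepancy limit would leave an uncontrolled remainder.
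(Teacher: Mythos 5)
Your proposal is correct and follows essentially the same route as the paper: an exact first-variation identity with explicit boundary terms (your master identity is an algebraic repackaging of \eqref{first-vari2}), a uniform boundary-energy bound obtained by testing with a normal extension field where $C_1<1$ in (A4) is decisive (this is Lemma \ref{lem:bo-ener}, with your manifestly nonnegative $B_{\varepsilon_i}$ replacing the paper's Young-inequality absorption), the use of (A6) to kill the discrepancy remainder, and the surface divergence theorem on $\partial\Omega$ plus the Gauss--Green formula for the finite-perimeter set $\{\tilde u=1\}$ to produce the $\cos\theta$ term. The only organizational differences are the stress--energy tensor bookkeeping and your appeal to Theorem \ref{mt2} to identify $\delta V\lfloor_{\Omega}$ with $-\lambda\, Du$, which the paper deliberately avoids by reading the interior measure directly off the limit identity \eqref{fi1}; both variants are sound.
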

The equality \eqref{tan-first} gives a complete description of the tangential component of
the first variation on the boundary. Also, \eqref{tan-first} may be considered as a generalized 
contact angle condition satisfied for a pair of varifold $V$ and $\tilde u$. To see this, consider
a case that $\|V\|=\mathcal H^{n-1}\lfloor_M$ and $M$ is a smooth hypersurface having a
smooth boundary $\partial M\subset \partial \Omega$.
Then the first variation $\delta V\lfloor_{\partial\Omega}(g)$ is represented as 
\[\int_{\partial M} \langle g, \tilde \nu\rangle\, d\mathcal H^{n-2},\]
where $\tilde \nu$ is the unit outward-pointing co-normal to $\partial M$. 
Then \eqref{tan-first} shows that $\partial M\cap \{\tilde \nu\neq \nu\}=\partial^*\{\tilde u=1\}$ and the angle formed by $\tilde \nu$ and $\tau$ is $\theta$. Away from $\partial^*
\{\tilde u=1\}$, $\partial M$ (if such set is non-empty) intersects with $\partial\Omega$ orthogonally. Hence, more
precisely, we should say that the contact angle condition with angle $\theta$ is satisfied 
on $\partial^*\{\tilde u=1\}$. For further remark on the implication of \eqref{tan-first}, see
Section \ref{remark}.

\section{Proof of Theorem \ref{behavior-u} and \ref{thm:first-variation}}\label{proof}

Throughout this section, we will replace the notation $\varepsilon_i$ by $\varepsilon$. First, we derive a formula for the first variation $\delta V_\varepsilon$. 
\begin{lem}
For $u_{\varepsilon}$ satisfying \eqref{equation1} and \eqref{equation2} 
and for $g \in C^1_c(\mathbb{R}^n; \mathbb{R}^n)$, we have 
\begin{equation}\label{first-vari2}
\begin{aligned}
c_0 \delta V_\varepsilon (g) &=\; \int_{\Omega \cap \{|\nabla u_\varepsilon| \neq 0\}} \nabla g \cdot \dfrac{\nabla u_\varepsilon}{|\nabla u_\varepsilon|} \otimes \dfrac{\nabla u_\varepsilon}{|\nabla u_\varepsilon|} \left(\dfrac{\varepsilon|\nabla u_\varepsilon|^2}{2} - \dfrac{W(u_\varepsilon)}{\varepsilon}\right) \, dx \\ 
&\; - \int_{\Omega \cap \{|\nabla u_\varepsilon |= 0\}} \nabla g \cdot I \dfrac{W(u_\varepsilon)}{\varepsilon} \; dx + \int_\Omega \lambda_\varepsilon u_\varepsilon \, {\rm div} g \; dx \\
&\; + \int_{\partial \Omega} \left(\dfrac{\varepsilon |\nabla u_\varepsilon|^2}{2} + \dfrac{W(u_\varepsilon)}{\varepsilon} - \lambda_\varepsilon \right) \langle g, \nu\rangle \; d\mathcal{H}^{n-1} + \int_{\partial \Omega} \sigma^{\prime}(u_\varepsilon) \langle \nabla u_\varepsilon, g \rangle \; d\mathcal{H}^{n-1} \\
&=:\; I^\varepsilon_1(g) + I^\varepsilon_2(g) + I^\varepsilon_3(g) + I^\varepsilon_4(g) + I^\varepsilon_5(g). 
\end{aligned}
\end{equation}
\end{lem}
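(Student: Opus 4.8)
The plan is to obtain \eqref{first-vari2} directly from the definition \eqref{first-vari} by an algebraic rearrangement of the varifold integrand followed by a single integration by parts into which the equation \eqref{equation1} and the boundary condition \eqref{equation2} are inserted. The central object is the diffuse stress--energy tensor
\[ T_{ij} := \Big(\frac{\varepsilon|\nabla u_\varepsilon|^2}{2}+\frac{W(u_\varepsilon)}{\varepsilon}\Big)\delta_{ij} - \varepsilon\,\partial_i u_\varepsilon\,\partial_j u_\varepsilon, \]
whose divergence will encode the Euler--Lagrange equation.

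First I would rewrite the integrand in \eqref{first-vari}. Abbreviating $p:=\nabla u_\varepsilon/|\nabla u_\varepsilon|$ on $\{|\nabla u_\varepsilon|\neq 0\}$ and $\phi_\varepsilon:=\frac{\varepsilon|\nabla u_\varepsilon|^2}{2}+\frac{W(u_\varepsilon)}{\varepsilon}$, the identity $\varepsilon|\nabla u_\varepsilon|^2\,p\otimes p=\varepsilon\,\nabla u_\varepsilon\otimes\nabla u_\varepsilon$ yields the pointwise decomposition
\[ \phi_\varepsilon\,(I-p\otimes p) = T + \Big(\frac{\varepsilon|\nabla u_\varepsilon|^2}{2}-\frac{W(u_\varepsilon)}{\varepsilon}\Big)\,p\otimes p \qquad\text{on }\{|\nabla u_\varepsilon|\neq 0\}. \]
Contracting with $\nabla g$ and multiplying \eqref{first-vari} by $c_0$, the second summand is exactly $I^\varepsilon_1(g)$. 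To incorporate the set $\{|\nabla u_\varepsilon|=0\}$, where $p$ is undefined and $T_{ij}=\frac{W(u_\varepsilon)}{\varepsilon}\delta_{ij}$, I would extend the $T$-integral to all of $\Omega$ and subtract the contribution of that set, the subtracted piece being $\int_{\Omega\cap\{|\nabla u_\varepsilon|=0\}}\nabla g\cdot I\,\frac{W(u_\varepsilon)}{\varepsilon}\,dx$, that is, $-I^\varepsilon_2(g)$. This gives
\[ c_0\,\delta V_\varepsilon(g) = \int_\Omega \nabla g\cdot T\,dx + I^\varepsilon_1(g) + I^\varepsilon_2(g). \]

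Next I would integrate $\int_\Omega\partial_i g_j\,T_{ij}\,dx$ by parts. A short computation using \eqref{equation1} shows that the Hessian contributions cancel and $\partial_i T_{ij}=\big(\frac{W'(u_\varepsilon)}{\varepsilon}-\varepsilon\Delta u_\varepsilon\big)\partial_j u_\varepsilon=\lambda_\varepsilon\,\partial_j u_\varepsilon$, so the interior term becomes $-\int_\Omega\lambda_\varepsilon\langle g,\nabla u_\varepsilon\rangle\,dx$, which a further integration by parts converts into $\int_\Omega\lambda_\varepsilon u_\varepsilon\,\dive g\,dx=I^\varepsilon_3(g)$ together with a boundary term. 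On $\partial\Omega$ the flux of $T$ is $T_{ij}\nu_i=\phi_\varepsilon\,\nu_j-\varepsilon\langle\nabla u_\varepsilon,\nu\rangle\,\partial_j u_\varepsilon$; inserting the boundary condition \eqref{equation2} in the form $\varepsilon\langle\nabla u_\varepsilon,\nu\rangle=-\sigma'(u_\varepsilon)$ replaces the second term by $\sigma'(u_\varepsilon)\partial_j u_\varepsilon$, producing $I^\varepsilon_5(g)$, while the $\phi_\varepsilon$-flux together with the boundary term from the second integration by parts assembles into the boundary integral $I^\varepsilon_4(g)$.

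The computation is entirely routine, being the standard manipulation of the Modica--Mortola stress tensor adapted to a nonzero Lagrange multiplier and a contact-energy boundary term. The only point requiring genuine care is the set $\{|\nabla u_\varepsilon|=0\}$: there the orthogonal projection $p$ and hence the varifold integrand are undefined, which is precisely why the separate term $I^\varepsilon_2(g)$ is needed and why one must perform the split described above rather than integrate the varifold formula directly. All boundary integrals and integrations by parts are justified by the smoothness $u_\varepsilon\in C^\infty(\overline\Omega)$ from (A2) and the divergence theorem on the smooth bounded domain $\Omega$.
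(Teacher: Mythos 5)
Your computation is correct and is, at bottom, the same argument as the paper's: the paper integrates $\frac{\varepsilon|\nabla u_\varepsilon|^2}{2}\,\nabla g\cdot I$ and $\frac{W(u_\varepsilon)}{\varepsilon}\,\nabla g\cdot I$ by parts separately and lets the Hessian terms cancel before invoking \eqref{equation1}, whereas you package exactly that cancellation into the pointwise identity $\partial_i T_{ij}=\lambda_\varepsilon\,\partial_j u_\varepsilon$ and integrate by parts once; the splitting off of $\{|\nabla u_\varepsilon|=0\}$, the insertion of \eqref{equation2} into the boundary flux, and the final integration by parts giving $I_3^\varepsilon$ are identical in both versions. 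The stress-tensor bookkeeping is tidier, but it is a reorganization rather than a different route.

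One point you should have flagged instead of passing over: carried out exactly, your second integration by parts produces the boundary term $-\int_{\partial\Omega}\lambda_\varepsilon u_\varepsilon\langle g,\nu\rangle\,d\mathcal{H}^{n-1}$, so what your argument actually proves is
\begin{equation*}
c_0\,\delta V_\varepsilon(g)=I_1^\varepsilon(g)+I_2^\varepsilon(g)+I_3^\varepsilon(g)
+\int_{\partial\Omega}\Big(\frac{\varepsilon|\nabla u_\varepsilon|^2}{2}+\frac{W(u_\varepsilon)}{\varepsilon}-\lambda_\varepsilon u_\varepsilon\Big)\langle g,\nu\rangle\,d\mathcal{H}^{n-1}+I_5^\varepsilon(g),
\end{equation*}
with $\lambda_\varepsilon u_\varepsilon$ rather than the bare $\lambda_\varepsilon$ appearing in the stated $I_4^\varepsilon(g)$. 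These do not coincide in general (they would if $u_\varepsilon\equiv 1$ on $\partial\Omega$, which is not assumed), so your claim that the flux terms ``assemble into $I_4^\varepsilon(g)$'' is not literally true for \eqref{first-vari2} as written. This is not a defect of your derivation: the paper's own proof, after integrating $-\int_\Omega\lambda_\varepsilon\langle\nabla u_\varepsilon,g\rangle\,dx$ by parts, yields the same $\lambda_\varepsilon u_\varepsilon$, so the factor $u_\varepsilon$ is simply missing from $I_4^\varepsilon$ in the statement of the lemma. The discrepancy is harmless downstream, since $I_4^\varepsilon$ is only ever used either with $\langle g,\nu\rangle=0$ (where it vanishes in both forms) or through the bound $|I_4^\varepsilon(g)|\le c\sup|g|$, which holds in either form by \eqref{as-energy2} and \eqref{bo-ener}. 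Still, a careful write-up should derive the corrected formula and note the mismatch rather than silently identifying the two expressions.
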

\begin{proof}
We fix a vector field $g \in C^1_c(\mathbb{R}^n; \mathbb{R}^n)$ and calculate the right-hand side of \eqref{first-vari}. 
Using the boundary condition \eqref{equation2} and by integration by parts, we have 
\begin{equation}\label{part1}
\begin{aligned}
&\; \int_{\Omega\cap\{|\nabla u_\varepsilon |\neq 0\}} \nabla g \cdot I \dfrac{\varepsilon|\nabla u_\varepsilon|^2}{2} \; dx = \int_\Omega \nabla g \cdot I \dfrac{\varepsilon|\nabla u_\varepsilon|^2}{2} \; dx \\
=& \; \int_{\partial \Omega} \dfrac{\varepsilon |\nabla u_\varepsilon|^2}{2} \langle g, \nu \rangle \; d\mathcal{H}^{n-1} - \varepsilon \int_\Omega \nabla^2 u_\varepsilon \cdot \nabla u_\varepsilon \otimes g \; dx \\
=& \; \int_{\partial \Omega} \dfrac{\varepsilon |\nabla u_\varepsilon|^2}{2} \langle g, \nu \rangle \; d\mathcal{H}^{n-1} + \varepsilon \int_\Omega \nabla g \cdot \nabla u_\varepsilon \otimes \nabla u_\varepsilon - \langle \nabla u_\varepsilon, \nabla\langle \nabla u_\varepsilon, g \rangle\rangle \; dx \\
=& \; \int_{\partial \Omega} \dfrac{\varepsilon |\nabla u_\varepsilon|^2}{2} \langle g, \nu \rangle + \sigma^{\prime}(u_\varepsilon) \langle \nabla u_\varepsilon, g \rangle \; d\mathcal{H}^{n-1} + \varepsilon \int_\Omega \Delta u_\varepsilon \langle \nabla u_\varepsilon, g \rangle + \nabla g \cdot \nabla u_\varepsilon \otimes \nabla u_\varepsilon \; dx. 
\end{aligned}
\end{equation}
Also by integration by parts, we obtain 
\begin{equation}\label{part2}
\begin{aligned}
&\; \int_{\Omega\cap\{|\nabla u_\varepsilon| \neq 0\}} \dfrac{W(u_\varepsilon)}{\varepsilon} \nabla g \cdot I \; dx = \int_\Omega \dfrac{W(u_\varepsilon)}{\varepsilon} \nabla g \cdot I \; dx - \int_{\Omega \cap \{|\nabla u_\varepsilon |= 0\}} \dfrac{W(u_\varepsilon)}{\varepsilon} \nabla g \cdot I \; dx\\
= &\; \int_{\partial \Omega} \dfrac{W(u_\varepsilon)}{\varepsilon} \langle g, \nu\rangle \; d\mathcal{H}^{n-1} - \int_\Omega \dfrac{W^{\prime}(u_\varepsilon)}{\varepsilon} \langle \nabla u_\varepsilon, g \rangle \; dx - \int_{\Omega \cap \{|\nabla u_\varepsilon| = 0\}} \dfrac{W(u_\varepsilon)}{\varepsilon} \nabla g \cdot I \; dx. 
\end{aligned}
\end{equation}
Substituting \eqref{part1} and \eqref{part2} into \eqref{first-vari}, we have by the interior equation \eqref{equation1}
\begin{align*} 
c_0\delta V_\varepsilon (g) =&\; \int_{\Omega \cap \{|\nabla u_\varepsilon |\neq 0\}} \nabla g \cdot \dfrac{\nabla u_\varepsilon}{|\nabla u_\varepsilon|} \otimes \dfrac{\nabla u_\varepsilon}{|\nabla u_\varepsilon|} \left(\dfrac{\varepsilon|\nabla u_\varepsilon|^2}{2} - \dfrac{W(u_\varepsilon)}{\varepsilon}\right) \, dx \\
&\; - \int_{\Omega \cap \{|\nabla u_\varepsilon |= 0\}} \nabla g \cdot I \dfrac{W(u_\varepsilon)}{\varepsilon} \; dx -\int_\Omega \lambda_\varepsilon \langle \nabla u_\varepsilon, g \rangle \; dx \\
&\; + \int_{\partial \Omega} \left(\dfrac{\varepsilon |\nabla u_\varepsilon|^2}{2} + \dfrac{W(u_\varepsilon)}{\varepsilon} \right) \langle g, \nu\rangle \; d\mathcal{H}^{n-1} + \int_{\partial \Omega} \sigma^{\prime}(u_\varepsilon) \langle \nabla u_\varepsilon, g \rangle \; d\mathcal{H}^{n-1}. 
\end{align*}
By integration by parts for the third term of right-hand side, we obtain \eqref{first-vari2}.
\end{proof}
\begin{lem}\label{lem:bo-ener}
Under the assumption of (A1)-(A5), 
there exists a constant $C_2>0$ depending only on $\Omega, C, E_0, C_1$ such that 
\begin{equation}\label{bo-ener} 
\int_{\partial \Omega} \dfrac{\varepsilon |\nabla u_\varepsilon|^2}{2} + \dfrac{W(u_\varepsilon)}{\varepsilon} \; d\mathcal H^{n-1} \le C_2. 
\end{equation}
\end{lem}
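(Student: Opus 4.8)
The plan is to test the first–variation identity \eqref{first-vari2} against a single fixed vector field $g\in C^1_c(\mathbb R^n;\mathbb R^n)$ that agrees with the outer unit normal on the boundary, i.e.\ $g=\nu$ on $\partial\Omega$; such a $g$ exists because $\partial\Omega$ is smooth and compact, and it depends only on $\Omega$. With this choice $\langle g,\nu\rangle\equiv 1$ on $\partial\Omega$, so the fourth boundary term reads $I^\varepsilon_4(g)=B_\varepsilon-\lambda_\varepsilon\mathcal H^{n-1}(\partial\Omega)$, where $B_\varepsilon:=\int_{\partial\Omega}\big(\tfrac{\varepsilon|\nabla u_\varepsilon|^2}{2}+\tfrac{W(u_\varepsilon)}{\varepsilon}\big)\,d\mathcal H^{n-1}$ is precisely the quantity to be bounded. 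Solving \eqref{first-vari2} for $I^\varepsilon_4(g)$ then expresses $B_\varepsilon$ in terms of $c_0\delta V_\varepsilon(g)$, the interior terms $I^\varepsilon_1(g),I^\varepsilon_2(g),I^\varepsilon_3(g)$, and the remaining boundary term $I^\varepsilon_5(g)$.

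First I would dispatch the interior contributions. The measure $\mu_\varepsilon$ has total mass $\tfrac1{c_0}\int_\Omega(\tfrac{\varepsilon|\nabla u_\varepsilon|^2}{2}+\tfrac{W(u_\varepsilon)}{\varepsilon})\,dx$, which is uniformly bounded: by (A3) the full energy is at most $E_0$, while $\|u_\varepsilon\|_{L^\infty}\le C$ together with the smoothness of $\sigma$ bounds $|\int_{\partial\Omega}\sigma(u_\varepsilon)\,d\mathcal H^{n-1}|$ by a constant depending only on $\Omega$ and $C$. Since the integrands of $c_0\delta V_\varepsilon(g)$ and $I^\varepsilon_1(g)$ are dominated by $\|\nabla g\|_\infty$ times the interior energy density (using $|\tfrac{\varepsilon|\nabla u_\varepsilon|^2}{2}-\tfrac{W(u_\varepsilon)}{\varepsilon}|\le\tfrac{\varepsilon|\nabla u_\varepsilon|^2}{2}+\tfrac{W(u_\varepsilon)}{\varepsilon}$ and that $I-\tfrac{\nabla u_\varepsilon}{|\nabla u_\varepsilon|}\otimes\tfrac{\nabla u_\varepsilon}{|\nabla u_\varepsilon|}$ is an orthogonal projection), and since $I^\varepsilon_2(g)$ and $I^\varepsilon_3(g)$ are controlled by the interior potential energy together with $|\lambda_\varepsilon|\le C$, $\|u_\varepsilon\|_{L^\infty}\le C$ and $|\Omega|<\infty$, all four are bounded by one constant $C_3=C_3(\Omega,C,E_0)$.

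The crux is $I^\varepsilon_5(g)=\int_{\partial\Omega}\sigma'(u_\varepsilon)\langle\nabla u_\varepsilon,\nu\rangle\,d\mathcal H^{n-1}$, the only term not obviously bounded. Here I would use (A5) and (A4) together, and — the key point — split the boundary gradient into tangential and normal parts, writing $B_\varepsilon=T_\varepsilon+N_\varepsilon+P_\varepsilon$ with $T_\varepsilon=\int_{\partial\Omega}\tfrac\varepsilon2|\nabla^{\top} u_\varepsilon|^2\,d\mathcal H^{n-1}$, $N_\varepsilon=\int_{\partial\Omega}\tfrac\varepsilon2\langle\nabla u_\varepsilon,\nu\rangle^2\,d\mathcal H^{n-1}$ and $P_\varepsilon=\int_{\partial\Omega}\tfrac{W(u_\varepsilon)}{\varepsilon}\,d\mathcal H^{n-1}$. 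Substituting $\sigma'(u_\varepsilon)=-\varepsilon\langle\nabla u_\varepsilon,\nu\rangle$ from (A5) gives the exact identity $I^\varepsilon_5(g)=-2N_\varepsilon$, while the same substitution combined with (A4) yields $N_\varepsilon=\tfrac12\int_{\partial\Omega}\tfrac{(\sigma'(u_\varepsilon))^2}{\varepsilon}\,d\mathcal H^{n-1}\le C_1^2 P_\varepsilon$.

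Finally I would assemble the estimate. Inserting $B_\varepsilon=T_\varepsilon+N_\varepsilon+P_\varepsilon$ and $I^\varepsilon_5(g)=-2N_\varepsilon$ into $B_\varepsilon=\lambda_\varepsilon\mathcal H^{n-1}(\partial\Omega)+c_0\delta V_\varepsilon(g)-I^\varepsilon_1(g)-I^\varepsilon_2(g)-I^\varepsilon_3(g)-I^\varepsilon_5(g)$ and cancelling one copy of $N_\varepsilon$ leaves $T_\varepsilon+P_\varepsilon\le C_3+N_\varepsilon\le C_3+C_1^2 P_\varepsilon$, hence $T_\varepsilon+(1-C_1^2)P_\varepsilon\le C_3$. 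Because $C_1\in[0,1)$ forces $1-C_1^2>0$, this bounds $P_\varepsilon$ and $T_\varepsilon$ separately, and then $B_\varepsilon=T_\varepsilon+N_\varepsilon+P_\varepsilon\le T_\varepsilon+(1+C_1^2)P_\varepsilon\le C_2$ with $C_2=C_2(\Omega,C,E_0,C_1)$, as claimed. The main obstacle is exactly the sign and size of $I^\varepsilon_5(g)$: a crude bound $|I^\varepsilon_5(g)|\le 2C_1^2 B_\varepsilon$ only closes the estimate for $C_1<1/\sqrt2$, so it is essential to isolate the normal kinetic part $N_\varepsilon$, absorb half of $I^\varepsilon_5(g)$ against it, and invoke (A4) to charge the remaining half to the potential energy $P_\varepsilon$ alone rather than to the full $B_\varepsilon$, which is what makes the argument work throughout the range $C_1<1$.
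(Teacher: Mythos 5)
Your proof is correct, and its skeleton coincides with the paper's: both test \eqref{first-vari2} with a fixed $C^1_c$ vector field extending the outer normal $\nu$ (the paper takes $g=\nabla f$ with $f$ a truncated signed distance function), identify the desired boundary energy $B_\varepsilon$ inside $I_4^\varepsilon(g)$ up to the bounded term $\lambda_\varepsilon\mathcal H^{n-1}(\partial\Omega)$, and bound $c_0\delta V_\varepsilon(g)$, $I_1^\varepsilon$, $I_2^\varepsilon$, $I_3^\varepsilon$ by constants via (A3) (you are in fact slightly more careful than the paper here, noting that the interior energy is controlled by $E_0$ plus a bound on $|\int_{\partial\Omega}\sigma(u_\varepsilon)\,d\mathcal H^{n-1}|$ coming from $\|u_\varepsilon\|_{L^\infty}\le C$). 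The only genuine divergence is the absorption of $I_5^\varepsilon$. You substitute the boundary condition \eqref{equation2} to get the exact identity $I_5^\varepsilon(g)=-\varepsilon\int_{\partial\Omega}\langle\nabla u_\varepsilon,\nu\rangle^2\,d\mathcal H^{n-1}=-2N_\varepsilon$, then use \eqref{equation2} once more together with \eqref{as-func} to get $N_\varepsilon\le C_1^2P_\varepsilon$, and close with the decomposition $B_\varepsilon=T_\varepsilon+N_\varepsilon+P_\varepsilon$. The paper instead leaves \eqref{equation2} untouched at this stage (it is already built into \eqref{first-vari2}) and applies Young's inequality with weight $C_1$, as in \eqref{eq5}: $|\sigma'(u_\varepsilon)\langle\nabla u_\varepsilon,\nu\rangle|\le\frac{C_1\varepsilon|\nabla u_\varepsilon|^2}{2}+\frac{(\sigma'(u_\varepsilon))^2}{2C_1\varepsilon}\le C_1\bigl(\frac{\varepsilon|\nabla u_\varepsilon|^2}{2}+\frac{W(u_\varepsilon)}{\varepsilon}\bigr)$, so that $|I_5^\varepsilon|\le C_1B_\varepsilon$ and $(1-C_1)B_\varepsilon\le c$. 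Both routes cover the whole range $C_1\in[0,1)$; consequently your closing assertion that isolating the normal kinetic part is \emph{essential} to go beyond $C_1<1/\sqrt 2$ is inaccurate --- the crude bound $|I_5^\varepsilon|\le 2C_1^2B_\varepsilon$ is not the only alternative, and the paper's weighted Young inequality reaches all $C_1<1$ with less bookkeeping. What your variant does buy is slightly finer information (separate control of $T_\varepsilon$, $N_\varepsilon$, $P_\varepsilon$ and the relation $N_\varepsilon\le C_1^2P_\varepsilon$), none of which is needed for \eqref{bo-ener}.
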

\begin{proof}
We choose a smooth function $f : \overline\Omega\rightarrow \mathbb R$ which satisfies
$\nabla f=\nu$ on $\partial\Omega$. For example, $f(x)=-{\rm dist}\,(x,\partial \Omega)$ 
near $\partial\Omega$ with a suitable truncation away from $\partial\Omega$ suffices. We then use $g=\nabla f$ in \eqref{first-vari2}. 
By the definition \eqref{first-vari} and \eqref{as-energy}, 
we have $c_0 |\delta V_{\varepsilon}
(\nabla f)|\leq E_0 \sup\|f\|_{C^2}$ so the left-hand side of 
\eqref{first-vari2} is bounded depending only on $E_0$ and $\Omega$.
The terms $I_1^{\varepsilon}(\nabla f), I_2^{\varepsilon}(\nabla f)$ 
and $I_3^{\varepsilon}(\nabla f)$ are also bounded by a constant 
depending only on $C, E_0,\Omega$. Thus we have
\[\int_{\partial\Omega} 
\Big(\frac{\varepsilon |\nabla u_{\varepsilon}|^2}{2}+
\frac{W(u_{\varepsilon})}{\varepsilon}\Big)\, d\mathcal H^{n-1}
\leq -\int_{\partial\Omega} \sigma'(u_{\varepsilon})\langle\nabla
u_{\varepsilon},\nu\rangle\, d\mathcal H^{n-1}+c(C,E_0,\Omega)\]
where $\nabla f\lfloor_{\partial\Omega}=\nu$ is used. 
By Young's inequality and the assumption \eqref{as-func},
\begin{equation}\label{eq5}
\begin{aligned}
\left| \int_{\partial \Omega}\sigma^{\prime}(u_{\varepsilon})  \langle \nabla u_{\varepsilon}, \nu \rangle \; d\mathcal{H}^{n-1} \right| \le& \; \int_{\partial \Omega} \dfrac{\varepsilon C_1 |\nabla u_\varepsilon|^2}{2} + \dfrac{(\sigma^{\prime}(u_\varepsilon))^2}{2C_1\varepsilon} \; d\mathcal{H}^{n-1} \\
\le& \; C_1 \int_{\partial \Omega} \dfrac{\varepsilon |\nabla u_\varepsilon|^2}{2} + \dfrac{W(u_\varepsilon)}{\varepsilon} \; d\mathcal{H}^{n-1}. 
\end{aligned}
\end{equation}
Since $C_1\in [0,1)$, we have the conclusion by setting $C_2=c(C,E_0,\Omega)/(1-C_1)$.
\end{proof}

\begin{proof}[Proof of Theorem \ref{behavior-u}]
Once we have \eqref{bo-ener}, a well-known argument (\cite{M1,St})
leads to the conclusion. For the
convenience of the reader, we include the argument. Let 
\[\Phi(s):=\int_{-1}^s\sqrt{2W(s)}\, ds\]
and define a function $w_{\varepsilon}:=\Phi\circ u_{\varepsilon}$ on $\partial \Omega$.
Since $|\nabla w_{\varepsilon}|=|\nabla u_{\varepsilon}|\sqrt{2W(u_{\varepsilon})}
\leq \frac{\varepsilon|\nabla  u_{\varepsilon}|^2}{2}+\frac{W(u_{\varepsilon})}{\varepsilon}$,
we have a uniform bound on $\|\nabla w_{\varepsilon}\|_{L^1(\partial \Omega)}$. 
Since $|\nabla_{\partial \Omega} w_\varepsilon| \le |\nabla w_\varepsilon|$, with $L^{\infty}$
bounds of \eqref{as-energy2}, the well-known compactness theorem of BV functions applies.
Thus we have a subsequence and $\tilde w\in BV(\partial\Omega)$
such that $w_{\varepsilon}\rightarrow \tilde w$ pointwise for a.e.~on $\partial\Omega$. 
Since $\Phi^{-1}$ is continuous, $u_{\varepsilon}$ converges a.e.~pointwise to $\tilde u :=
\Phi^{-1}\circ \tilde w$. Fatou's lemma with $\int_{\partial\Omega}W(u_{\varepsilon})\,
d\mathcal H^{n-1}\rightarrow 0$ also proves that $\tilde u=\pm 1$. This ends the proof.
\end{proof}
\begin{proof}[Proof of Theorem \ref{thm:first-variation} (1)]
Fixing $g\in C_c^1(\mathbb R^n;\mathbb R^n)$, we have
$\lim_{i\rightarrow\infty}\delta V_{\varepsilon_i}(g)=\delta V(g)$
due to the varifold convergence. In \eqref{first-vari2}, due to 
(A6), we have $\lim_{i\rightarrow\infty} |I_1^{\varepsilon_i}(g)|+
|I_2^{\varepsilon_i}(g)|=0$. By Theorem \ref{mt1}, we have
\begin{equation}\label{i-3}
\lim_{i\rightarrow\infty} I_3^{\varepsilon_i}(g)=\lambda\int_{\Omega} u\,{\rm div}\,g\,dx=-2\lambda \int_{M}\Big\langle g,\frac{\nabla u}{|
\nabla u|}\Big\rangle\, d\mathcal H^{n-1}+\lambda\int_{\partial
\Omega} u\,\langle g,\nu\rangle\, d\mathcal H^{n-1},
\end{equation}
where $M=\Omega\cap \partial^*\{u=1\}$. Using \eqref{bo-ener} and a similar
argument as in \eqref{eq5}, we can show $|I_4^{\varepsilon_i}(g)|+
|I_5^{\varepsilon_i}(g)|\leq c\sup|g|$, where $c$ is independent of $g$
or $i$. 
Combined all these estimates, we show that $|\delta V(g)|\leq 
c\sup |g|$ and $\|\delta V\|(\overline\Omega)$ is finite.
\end{proof}
\begin{proof}[Proof of Theorem \ref{thm:first-variation} (2)]
It suffices to prove the claim for $g\in C_c^1(\mathbb R^n ; \mathbb R^n)$ with $\langle g,\nu\rangle=0$ on $\partial\Omega$, since
the general $C_c(\mathbb R^n;\mathbb R^n)$ case can be proved by 
approximation. For such $g$, in \eqref{i-3}, the last term vanishes
and also $I_4^{\varepsilon}(g)=0$ in \eqref{first-vari2}. 
For $I_5^{\varepsilon}(g)$, we have $\langle \nabla u_{\varepsilon},g\rangle=\langle \nabla_{\partial\Omega} u_{\varepsilon},g\rangle$ due to $\langle g,\nu\rangle=0$. 
Thus, by the divergence theorem on $\partial \Omega$, we have
\[ 
I^\varepsilon_5(g) =  
\int_{\partial \Omega} \sigma^{\prime}(u_\varepsilon) \langle \nabla_{\partial \Omega} u_\varepsilon, g \rangle \; d\mathcal{H}^{n-1} = -\int_{\partial \Omega} \sigma(u_\varepsilon) \,{\rm div}_{\partial \Omega}\, g \; d\mathcal{H}^{n-1}. 
\]
These lead to the conclusion that 
\begin{equation}
\label{fi1}
c_0\delta V(g)=-2\lambda\int_M \Big\langle g,\frac{\nabla u}{|
\nabla u|}\Big\rangle\, d\mathcal H^{n-1}-\int_{\partial\Omega}
\sigma(\tilde u)\, {\rm div}_{\partial\Omega}\, g\, d\mathcal H^{n-1}.
\end{equation}
Since $\tilde u\in BV(\partial\Omega)$ with values in $\{\pm 1\}$, $\partial^*\{
\tilde u=1\}$ and the inward-pointing unit normal $\tau$ are
well-defined, and
\begin{equation}
\label{fi2}
-\int_{\partial\Omega}\sigma(\tilde u)\, {\rm div}_{\partial\Omega}
\, g\, d\mathcal H^{n-1}=(\sigma(1)-\sigma(-1))\int_{\partial^*
\{\tilde u=1\}} \langle \tau,g\rangle\, d\mathcal H^{n-2}.
\end{equation}
Since we are interested in obtaining $\delta V\lfloor_{\partial\Omega}$,
and since $M\subset \Omega$, we obtain \eqref{tan-first} from 
\eqref{fi1} and \eqref{fi2}. 
\end{proof}

\section{Additional remarks}\label{remark}
\subsection{The case $\|V\|(\partial\Omega)=0$}
If we further assume that $\|V\|(\partial\Omega)=0$, then, 
non-trivial $\delta V\lfloor_{\partial\Omega}$ is necessarily
singular with respect to $\|V\|\lfloor_{\partial \Omega}$. Thus
using the notation of \eqref{gene-mean}, we conclude from
\eqref{tan-first} that
\[\int_Z \langle\nu_{\rm sing}, g\rangle\, d\|\delta V\|_{\rm sing}
=\cos\theta\int_{\partial^*\{\tilde u=1\}} \langle g,\tau\rangle\,
d\mathcal H^{n-2}\]
for $g\in C(\partial \Omega,\mathbb R^n)$ with $\langle g,\nu\rangle=0$ on $\partial\Omega$. 
If $Z=\partial^*\{\tilde u=1\}$ and $\|\delta V\|_{\rm sing}\lfloor_Z
=\mathcal H^{n-2}\lfloor_Z$, then we have a clear-cut statement 
that $\nu_{\rm sing}-\langle\nu_{\rm sing},\nu\rangle\nu=(\cos\theta)
\,\tau$ on $Z$, which says that the generalized 
co-normal of $V$ satisfies the contact angle condition with 
angle $\theta$. Unfortunately, even in this case, we can only conclude
that $\partial^*\{\tilde u=1\}\subset Z$. Also we do not know in 
general if $\|\delta V\|_{\rm sing}\lfloor_{\partial^*\{\tilde u=1\}}
=\mathcal H^{n-2}\lfloor_{\partial^*\{\tilde u=1\}}$. On the other 
hand, on $Z\setminus \partial^*\{\tilde u=1\}$, even though we equally
do not know what $\|\delta V\|_{\rm sing}$ is in general, we may 
conclude $\nu_{\rm sing}=\nu$, $\|\delta V\|_{\rm sing}$ a.e.~ since
the right-hand side is 0 away from $\partial^*\{\tilde u=1\}$. Thus
the right-angle condition is simpler to describe than other
non-right-angle conditions. 
\subsection{The case $\|V\|(\partial\Omega)>0$}
It may be somewhat counter-intuitive to imagine that 
the measures $\mu_{\varepsilon}$ may ``pile-up'' on the boundary as
 $\varepsilon\rightarrow 0$, resulting in $\|V\|(\partial\Omega)>0$.
For $\sigma=0$ and $\Omega=B_1(0)$, it is not difficult to 
construct such example, however, as described in \cite[Section 8]{MT}
(see also \cite{MW, MNW} for examples for more general domains and
of higher-multiplicity concentration). Interestingly, even if $\|V\|(\partial\Omega)>0$, as long as $\xi=0$, results in the paper 
still hold true. We expect that the presence of non-trivial $\|V\|$ in
$\partial\Omega$ affects the normal component of the first variation, 
but not the tangential one. In all known examples where boundary concentration
of $\|V\|$ occurs, $\xi$ is zero. 
\subsection{Monotonicity formula}
In \cite{KT}, motivated by the present paper, we introduce a notion
of generalized contact angle condition for varifold and derive a 
monotonicity
formula valid up to the boundary. The condition in \cite{KT}
is even weaker than the one obtained in Theorem 
\ref{thm:first-variation} in that we do not need to have a bounded first variation 
up to the boundary. Thus the result of \cite{KT} applies to $V$ 
in this paper and up to the boundary monotonicity formula can be
obtained. For $\sigma=0$ and convex $\Omega$, in \cite{T}, the
similar up to the boundary monotonicity formula was obtained even
for the diffused energy (i.e.~before letting $\varepsilon\rightarrow 0$). 
To gain a better understanding on $V$ obtained in this paper, 
it is desirable to establish such
monotonicity formula for diffused energy since one can conclude 
a better convergence of interface to ${\rm spt}\,\|V\|$. This is ultimately
connected to getting a good estimate on the discrepancy up to the
boundary and showing $\xi=0$, along the line of logics in \cite{HT,I,MT}.

\end{document}